\newtheorem{Theorem}{\indent Theorem}[section]
\newtheorem{Lemma}[Theorem]{\indent Lemma}
\newtheorem{Corollary}[Theorem]{\indent Corollary}
\theoremstyle{remark}
\newtheorem{Remark}{Remark}
\begin{document}
\centerline{
\bf On squares associated to the Piatetski-Shapiro sequences}
\bigskip
\centerline{\small  Wei Zhang}
%\bigskip
%\centerline{Department of Computing, Macquarie University, igor.shparlinski@mq.edu.au}
\bigskip
\centerline{School of Mathematics and Statistics, Henan University, zhangweimath@126.com}

\centerline{https://orcid.org/0000-0002-5505-2671}

\bigskip

%%%%% \centerline{\hrulefill}

\textbf{Abstract}
We can give a slight improvement for  a result related to the distribution of squares in Piatetski-Shapiro sequences of Liu-Shparlinski-Zhang \cite{lsz} by involving three different ideas.

\textbf{Keywords} Piatetski-Shapiro sequences,  exponent pair, exponential sums, square-free

\textbf{2010 Mathematics Subject Classification}
11B83, 11K65, 11L07, 11N25, 11L40

\bigskip
\bigskip
\numberwithin{equation}{section}

\section{Introduction}
In this paper, we will   further consider the issue related to the squares in the   Piatetski-Shapiro sequences
$
\left( [n^{c}]\right)_{n\in \mathbb{N}},\ \textup{for}\ c>1\   \ \textup{and}\  c\notin \mathbb{N}.
$
A number $q$ is called
square-free integer  if and only if (see  \cite{MV})
$
m^{2}|q\Longrightarrow m=1.
$
It
is well known that in terms of sufficiently large $x,$ one has
$
\sum_{n\in\mathcal{Q}_{2}}n^{-s}=
\zeta(s)/\zeta(2s),\  \sigma>1,
$
and
$
\sum_{n\leq x,\ n\in\mathcal{Q}_{2}}1=
x/\zeta(2)+O(x^{1/2}),
$
where $\mathcal{Q}_{2}$ is
the set of positive square-free integers.
Fixed $c\in(1,3/2),$ following a paper
of Stux \cite{St}, by using some results of Deshouillers \cite{De}, in 1978, Rieger \cite{Re}
proved that
$
\sum_{n\leq x,\ [n^{c}]\in\mathcal{Q}_{2}}1= 6x/\pi^{2}
+O(x^{c/2+1/4+\varepsilon}).
$
This was improved and generalized by many experts ( for example,  see \cite{BBB,CZ,CZ1}).
In \cite{CZ1}, Cao and Zhai proved that
$\sum_{n\leq x,\ [n^{c}]
\in\mathcal{Q}_{2}}1=6x/\pi^{2}
 +O(x^{1-\varepsilon}),$
where $c\in(1,149/87)$ and
$\varepsilon$
is an arbitrarily small constant.

Recently, the distribution of squares in a Piatetski-Shapiro sequence was considered by  Liu-Shparlinski-Zhang \cite{lsz}. For a real number $1<c<2$ and positive integers $N$ and $s$, let $Q_{c}(s,N)$ denote the number of solutions to the equation
\[
[n^c]=sm^2,\quad 1\leq n\leq N, \quad m,n\in \mathbb{Z}.
\]
Then we have
$Q_{c}(s,N)=\sum_{n\leq N\atop [n^{c}]=s\square}1,
$
 where $\square$ denote a non-specified square. The results of
 Liu-Shparlinski-Zhang \cite{lsz} imply that for $\gamma=1/c,$  any $c\in(1,2)$ and any exponent pair $(\kappa,\lambda),$ as $N\rightarrow\infty,$ there exists an asymptotic formula
\begin{align*}
\begin{split}
Q_{c}(s,N)&=\gamma(2\gamma-1)^{-1}
s^{-1/2}N^{1-c/2}\\
& +O\left(s^{-\rho_{1}(\kappa,\lambda)}
N^{\mathcal{\theta}_{1}(c,\kappa,\lambda)+\varepsilon}
+s^{-\rho_{2}(\kappa,\lambda)}
N^{\mathcal{\theta}_{2}(c,\kappa,\lambda)+\varepsilon}
\right),
\end{split}
 \end{align*}
where
\[
\rho_{1}(\kappa,\lambda)=\frac{\lambda}{2(1+\kappa)}
,\ \ \ \ \rho_{2}(\kappa,\lambda)=\frac{\lambda-\kappa}{2}
\]
and
\[
\mathcal{\theta}_{1}(c,\kappa,\lambda)=\frac{c\lambda+2\kappa}{2(1+\kappa)},\ \ \ \ \ \mathcal{\theta}_{2}(c,\kappa,\lambda)=\frac{c(\lambda-\kappa)+2\kappa}{2}
.\]
A minor refined result was given in \cite{zw}.
Moreover, an averaged type sum was also considered by Liu-Shparlinski-Zhang \cite{lsz}.
 Let
$
\mathfrak{Q}_{c}(S,N)=\sum_{\substack{s\leq S\\ s\in\mathcal{Q}_{2}}} Q_{c}(s,N),
$
which can be seen as $Q_{c}(s,N)$  on average over positive square-free integers $s\leq S$. Obviously, we can observe that only the case $S\leq N^{c}$ is meaningful. So we always assume this. Nontrivial upper bounds for  $\mathfrak{Q}_{c}(S,N)$ correspond to a lower bound on the number square-free parts of the integers $[n^{c}],$ $1\leq n\leq N.$ Equally, this can also be formulated as a lower bound on the number of distinct quadratic fields in the sequence  of fields $\mathbb{Q}\left(\sqrt{[n^{c}]}\right).$
As to $\mathfrak{Q}_{c}(S,N)$, their arguments in \cite{lsz} imply that for any $c\in(1,2),$ $\gamma=1/c$ and  $N\rightarrow\infty$, one has
\begin{align}\label{qq}
\begin{split}
\mathfrak{Q}_{c}(S,N)&-
\frac{12\gamma}{\pi^{2}(2\gamma-1)}
S^{1/2}N^{1-c/2}\\
&\ll S^{1/5}N^{(1+2c)/5+\varepsilon}
+S^{1/8}N^{(2+3c)/8+\varepsilon}
+S^{5/8}N^{3c/8+\varepsilon}
+SN^{1-c+\varepsilon}.
\end{split}
\end{align}
 By introducing some different ideas and inserting similar ideas of Corollary 6.7 in \cite{Bor}, we can  show the follows. We can eliminate two error terms for the above asymptotic formula (\ref{qq}) for certain range of $c$.
\begin{Theorem}\label{LSZ0}
For any fixed $c\in(1,2),$ $\gamma=1/c,$   any exponent pair $(\kappa,\lambda)$ with $(1-\kappa)/2\leq\lambda-\kappa$   and  $N\rightarrow\infty$,  we have
\begin{align*}
\begin{split}
\mathfrak{Q}_{c}(S,N)&-
\frac{12\gamma}{\pi^{2}(2\gamma-1)}
S^{1/2}N^{1-c/2}\\
&\ll \min\left\{S^{1/5}N^{(1+2c)/5+\varepsilon},
S^{1/8}N^{(2+3c)/8+\varepsilon},
S^{\frac{\lambda}{1+\kappa}-\frac12}
N^{\frac{\kappa}{1+\kappa}+\frac c2+\varepsilon}+N^{\varepsilon}S^{1-\gamma}\right\}
\\&+SN^{1-c+\varepsilon}.
\end{split}
\end{align*}
\end{Theorem}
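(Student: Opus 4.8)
The plan is to establish the third entry of the minimum as a self-contained estimate, namely
\[
\mathfrak{Q}_{c}(S,N)-\frac{12\gamma}{\pi^{2}(2\gamma-1)}S^{1/2}N^{1-c/2}\ll S^{\frac{\lambda}{1+\kappa}-\frac12}N^{\frac{\kappa}{1+\kappa}+\frac c2+\varepsilon}+N^{\varepsilon}S^{1-\gamma}+SN^{1-c+\varepsilon},
\]
the first two entries being the single-method bounds already produced by the arguments of \cite{lsz}; the stated minimum is then obtained by retaining, for each pair $(S,N)$, the sharpest of the three available estimates. To begin, I would unfold $\mathfrak{Q}_{c}(S,N)$ combinatorially. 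Since $s$ is squarefree, for each $n$ the equation $[n^{c}]=sm^{2}$ has at most one solution $(s,m)$, so $\mathfrak{Q}_{c}(S,N)=\sum_{s\le S,\,s\in\mathcal{Q}_{2}}\sum_{m}\#\{n\le N:[n^{c}]=sm^{2}\}$. Detecting $[n^{c}]=sm^{2}$ through $(sm^{2})^{\gamma}\le n<(sm^{2}+1)^{\gamma}$ and writing the number of such integers $n$ with the sawtooth $\psi(t)=t-\lfloor t\rfloor-\tfrac12$, one arrives at a smooth main sum $\sum_{s,m}\big((sm^{2}+1)^{\gamma}-(sm^{2})^{\gamma}\big)$ minus an oscillating sum $\sum_{s,m}\big(\psi((sm^{2}+1)^{\gamma})-\psi((sm^{2})^{\gamma})\big)$, both ranged over $s\le S$ squarefree and $sm^{2}\le N^{c}$. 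Expanding $(sm^{2}+1)^{\gamma}-(sm^{2})^{\gamma}=\gamma(sm^{2})^{\gamma-1}+O((sm^{2})^{\gamma-2})$ and evaluating the resulting sums by Euler--Maclaurin together with $\sum_{s\le x,\,s\in\mathcal{Q}_{2}}1=x/\zeta(2)+O(x^{1/2})$ reproduces the main term $\frac{12\gamma}{\pi^{2}(2\gamma-1)}S^{1/2}N^{1-c/2}$; its boundary and remainder contributions are $\ll SN^{1-c+\varepsilon}$, and the remaining lower-order terms are dominated by the error bound exactly as in \cite{lsz}, so here I would only record the modifications.

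The decisive new step concerns the oscillating sum, where it suffices, after the triangle inequality (the harmless shift by $1$ does not affect the exponent-pair analysis), to bound $\sum_{s\le S,\,s\in\mathcal{Q}_{2}}\sum_{m}\psi((sm^{2})^{\gamma})$. In the spirit of Corollary 6.7 of \cite{Bor}, the idea is to carry out the \emph{inner} summation over $s$ rather than over $m$: for fixed $m$ the phase $(sm^{2})^{\gamma}=m^{2\gamma}s^{\gamma}$ is a monomial in $s$, so inserting the truncated expansion $\psi(t)=-\sum_{1\le|h|\le H}(2\pi\mathrm{i}h)^{-1}e(ht)+O\big(\min(1,(H\|t\|)^{-1})\big)$ with $e(x)=\mathrm{e}^{2\pi\mathrm{i}x}$ and applying the exponent pair $(\kappa,\lambda)$ in the form $\sum_{s\sim\Sigma}e(hm^{2\gamma}s^{\gamma})\ll(hm^{2\gamma}\Sigma^{\gamma})^{\kappa}\Sigma^{\lambda-\kappa}$, then optimising $H$, gives for a dyadic block the bound $\sum_{s\sim\Sigma}\psi(m^{2\gamma}s^{\gamma})\ll\Sigma^{(\gamma\kappa+\lambda)/(1+\kappa)}m^{2\gamma\kappa/(1+\kappa)}$. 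Summing over $m\le N^{c/2}\Sigma^{-1/2}$ and using $c\gamma=1$, the exponents collapse to $\Sigma^{\lambda/(1+\kappa)-1/2}N^{\kappa/(1+\kappa)+c/2}$ for a single scale. The hypothesis $(1-\kappa)/2\le\lambda-\kappa$ is exactly the inequality $\frac{\lambda}{1+\kappa}-\frac12\ge0$, which ensures that, on summing over the dyadic scales $\Sigma\le S$, the top scale $\Sigma\asymp S$ dominates; this produces the target term $S^{\lambda/(1+\kappa)-\frac12}N^{\kappa/(1+\kappa)+c/2+\varepsilon}$ and in particular removes the term $S^{5/8}N^{3c/8+\varepsilon}$ of (\ref{qq}).

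The principal obstacle is that the exponent pair requires a smooth phase over an \emph{unrestricted} interval of $s$, whereas $s$ is constrained to be squarefree. I would resolve this by writing $\mu^{2}(s)=\sum_{d^{2}\mid s}\mu(d)$ and substituting $s=d^{2}t$, after which the phase becomes $(t(dm)^{2})^{\gamma}$; the exponent pair is then applied to the $t$-sum with amplitude $(dm)^{2\gamma}$. A direct computation shows that the $d$-dependence of the resulting estimate is $d^{-2\lambda/(1+\kappa)}$, so the sum over $d$ converges precisely under the hypothesis $2\lambda/(1+\kappa)>1$ (an $N^{\varepsilon}$ being conceded at equality), again exactly the stated condition; this is the step I expect to be the most delicate, since it also requires the exponent-pair estimate to be uniform in $d$ and $m$. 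The ranges where the optimal truncation level $H$ falls below $1$, namely the short blocks $t\sim S/d^{2}\asymp1$, must be treated trivially, and these contributions, together with the incomplete block at the top of the $s$-range and the error terms of the squarefree sieve, are what account for the secondary contribution $N^{\varepsilon}S^{1-\gamma}$. Assembling the dyadic decomposition, the squarefree sieve, and these uniform bounds yields the third estimate above; combining it with the two bounds inherited from \cite{lsz} gives the minimum and completes the proof.
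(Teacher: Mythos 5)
Your derivation of the third entry of the minimum is sound and is essentially the paper's own argument (Lemma \ref{001}): sieve the squarefree condition by $\mu^{2}(s)=\sum_{d^{2}\mid s}\mu(d)$, apply an exponent pair to the innermost sum over the cofactor with amplitude $(dm)^{2\gamma}$, and note that $(1-\kappa)/2\le\lambda-\kappa$ is exactly $\lambda/(1+\kappa)\ge 1/2$, so the top scale dominates; your exponent bookkeeping ($\Sigma^{\lambda/(1+\kappa)-1/2}N^{\kappa/(1+\kappa)+c/2}$ and the secondary term $S^{1-\gamma}$ from the $NT^{-1}$ piece) checks out.

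The gap is in your treatment of the first two entries. You assert they are ``single-method bounds already produced by the arguments of \cite{lsz}'', but (\ref{qq}) gives the error only as the \emph{sum} of four terms, including $S^{5/8}N^{3c/8+\varepsilon}$, and that term is not dominated by $S^{1/5}N^{(1+2c)/5+\varepsilon}+SN^{1-c+\varepsilon}$ in general (comparing exponents, $S^{5/8}N^{3c/8}\le S^{1/5}N^{(1+2c)/5}$ forces $S\le N^{(8+c)/17}$, which fails for $S$ near $N^{c}$). Indeed the paper records that the authors of \cite{lsz} themselves observed their result cannot be improved merely via Robert--Sargos Theorem 1, precisely because all of their terms genuinely dominate somewhere. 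Establishing $S^{1/5}N^{(1+2c)/5+\varepsilon}+SN^{1-c+\varepsilon}$ and $S^{1/8}N^{(2+3c)/8+\varepsilon}+SN^{1-c+\varepsilon}$ as standalone bounds is the content of Lemmas \ref{003} and \ref{002} and requires two further ideas absent from your proposal: (i) since $d$ and $m$ enter the phase with the same exponent $2\gamma$, the double sum over $(d,m)$ collapses to a single sum over $t=dm$ weighted by a divisor function, turning the quadruple sum in $h,d,m,r$ into a triple sum to which the three-term bound of Lemma \ref{rs} applies and yields $S^{1/8}N^{(2+3c)/8+\varepsilon}$ alone; and (ii) applying that three-term Robert--Sargos estimate to the triple sum over $m,d,r$ for each fixed $h$, then summing over $h$ with weight $h^{-1}$ and choosing $H=[S^{3/10}N^{c/10-1/5}]$, which yields $S^{1/5}N^{(1+2c)/5+\varepsilon}$ alone. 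Without these two arguments the minimum over three independent bounds --- which is the actual improvement over (\ref{qq}) --- is not proved.
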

\begin{Remark}
In \cite{lsz}, the authors pointed out that  one cannot improve their result only by using some recent bounds of Robert and Sargos \cite{RS} (Theorem 1), for that there are exactly the   terms
that dominate in their applications.   The first  idea is from \cite{zw,CZ} such that   we can find a suitable exponent pair $(\kappa,\lambda)$ such that $\lambda/(1+\kappa)=1/2.$ Hence, the   term $S^{\frac{\lambda}{1+\kappa}-\frac12}
N^{\frac{\kappa}{1+\kappa}+\frac c2+\varepsilon}$ in the asymptotic formula of Theorem  \ref{LSZ0} is independent of $S.$ The second idea is that we can change quadruple sum into  triple sum by interesting the classical divisor function. This is possible. Because  there
two variables have the same indexes. The third idea is that
a coefficient involving one variable is 1. This allows us to   use Theorem 3 in \cite{RS}.
\end{Remark}
Corollary 2.4 (as a corollary of (\ref{qq})) in \cite{lsz} implies that for any $c\in(1,2),$ any $\varepsilon>0$ and $S\leq N^{\tau(c)-\varepsilon},$
one has
$
\mathfrak{Q}_{c}(S,N)=o(N)
$
as $N\rightarrow\infty,$
where
\begin{align} \label{key}
\tau(c)=\begin{cases}
(8-3c)/5,\ \ &\textup{for}\ 1<c\leq 12/7,\\
2(2-c),\ \ &\textup{for}\  c\geq 12/7.
\end{cases}
\end{align}
As cited in \cite{lsz}, only the cases $S\leq N^{c}$ is meaningful. Hence the best possible range of $S$ should be $S\leq N^{c}.$
With Theorem \ref{LSZ0} and choosing $(\kappa,\lambda)=BABAAB(0,1)=(4/18,11/18)$, we can obtain the follows.

\begin{Corollary}\label{LSZ2}
For  any $\varepsilon>0$,
one has
$
\mathfrak{Q}_{c}(S,N)=o(N)
$
as $N\rightarrow\infty,$
where
\begin{align}
\tau(c)=\begin{cases}
c-\varepsilon,\ \ &\textup{for}\ 1<c\leq 18/11,\\
3(2-c),\ \ &\textup{for}\ 18/11<c<2.
\end{cases}
\end{align}
\end{Corollary}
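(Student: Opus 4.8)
The plan is to apply Theorem \ref{LSZ0} with the single exponent pair $(\kappa,\lambda)=BABAAB(0,1)=(2/9,11/18)$ and then read off the largest range of $S$ for which every surviving term is $o(N)$. First I would confirm that this is a legitimate exponent pair by running the $A$- and $B$-processes on the trivial pair $(0,1)$, and check the hypothesis $(1-\kappa)/2\leq\lambda-\kappa$ of the theorem; here both sides equal $7/18$, so the condition holds with equality. The decisive feature of the choice is that $\lambda/(1+\kappa)=(11/18)/(11/9)=1/2$ and $\kappa/(1+\kappa)=(2/9)/(11/9)=2/11$, so the third member of the minimum collapses to the $S$-free quantity
\[
N^{\frac{2}{11}+\frac c2+\varepsilon}+N^{\varepsilon}S^{1-\gamma},\qquad \gamma=1/c.
\]

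Next I would split according to the size of $c$. For $1<c<18/11$ I would bound the minimum by this third term. Its first piece $N^{2/11+c/2+\varepsilon}$ is $o(N)$ precisely when $2/11+c/2<1$, i.e. $c<18/11$, which is exactly the range under consideration; its second piece $N^{\varepsilon}S^{1-\gamma}=N^{\varepsilon}S^{(c-1)/c}$ is $o(N)$ whenever $S\leq N^{c/(c-1)-\varepsilon}$, a constraint weaker than $S\leq N^{c}$ since $c/(c-1)>c$ for $c<2$. It then remains to check that the main term $S^{1/2}N^{1-c/2}$ and the outer term $SN^{1-c+\varepsilon}$ are $o(N)$; both hold as soon as $S\leq N^{c-\varepsilon}$, which yields $\tau(c)=c-\varepsilon$. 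The $-\varepsilon$ is genuinely needed: at $S=N^{c}$ the outer term is of order $N^{1+\varepsilon}$, so one must stay a fixed power below the critical line $S=N^{c}$ in order to absorb the $N^{\varepsilon}$ losses.

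For $18/11<c<2$ the $S$-free piece $N^{2/11+c/2}$ already exceeds $N$, so the third term is useless and I would instead bound the minimum by $S^{1/8}N^{(2+3c)/8+\varepsilon}$. This is $o(N)$ exactly when $S^{1/8}=o(N^{(6-3c)/8})$, i.e. $S\leq N^{3(2-c)-\varepsilon}$, giving $\tau(c)=3(2-c)$; one checks that the competing term $S^{1/5}N^{(1+2c)/5+\varepsilon}$ would only allow $S\leq N^{2(2-c)}$, so the $1/8$-term is the better choice. Finally, with $S\leq N^{3(2-c)-\varepsilon}$ the main term is of order $N^{4-2c-\varepsilon/2}$ and the outer term of order $N^{7-4c}$, both $o(N)$ because $c>18/11>3/2$.

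I do not expect a serious obstacle, as the corollary is essentially a bookkeeping consequence of Theorem \ref{LSZ0}; the only points requiring care are confirming that $(2/9,11/18)$ really is the pair produced by $BABAAB(0,1)$ and lies on the boundary line $\lambda/(1+\kappa)=1/2$, and tracking the $\varepsilon$-losses so that the outer term $SN^{1-c+\varepsilon}$ remains strictly below $N$, which is what forces the slightly reduced exponent $c-\varepsilon$ in the first range rather than the naive $c$.
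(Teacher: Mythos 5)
Your proposal is correct and follows essentially the same route as the paper: apply Theorem \ref{LSZ0} with $(\kappa,\lambda)=BABAAB(0,1)=(2/9,11/18)$, note that $\lambda/(1+\kappa)=1/2$ kills the $S$-dependence of the third term (giving $\tau(c)=c-\varepsilon$ for $c<18/11$), and use the $S^{1/8}N^{(2+3c)/8+\varepsilon}$ term for $18/11<c<2$ to get $\tau(c)=3(2-c)$. Your bookkeeping of the hypothesis $(1-\kappa)/2\leq\lambda-\kappa$ (equality at $7/18$) and of the $\varepsilon$-losses matches what the paper intends.
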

Obviously, for fixed $c\in(1,18/11)$, we enlarge the effective range of $S$ given by (\ref{key}). It is worth emphasizing that the range of $S$ given by Corollary
\ref{LSZ2} on $c\in(1,11/8)$ is almost best possible.

\begin{Remark}
A similar remark can also be seen in the author's article \cite{zw}. We quote here for complementary.
Assuming that $(a,1/2+a)$  is an exponent  pair, then
we can get $
\mathfrak{Q}_{c}(S,N)=O(N^{1-\varepsilon})
$
with $c\in(1,(6a+5)/(4a+3)).$ Choosing $a=1/6,$ for $1<c<\frac{18}{11}=1.\dot{6}\dot{3}$ and   $S\leq N^{c-\varepsilon},$ we have
$
\mathfrak{Q}_{c}(S,N)=O(N^{1-\varepsilon}).
$
This gives  Corollary \ref{LSZ2} for the first case.
Specially, by involving some results in \cite{Bo}, for $1<c<\frac{249}{152}$ and   $S\leq N^{c-\varepsilon},$  one can get
\begin{align}\label{b}
\mathfrak{Q}_{c}(S,N)=O(N^{1-\varepsilon}).
\end{align}
In fact, assuming that $(a,1/2+a)$  is an exponential pair, then by using the relation
$BABA(a,1/2+a)=\left((2a+1)/(6a+5),(4a+3)/(6a+5)\right),$
for $c\in(1,(6a+5)/(4a+3))$ and   $S\leq N^{c-\varepsilon},$
we can get
$\mathfrak{Q}_{c}(S,N)\ll N^{1-\varepsilon}.$
By the work of Bourgain \cite{Bo}, we can choose $a=13/84+\varepsilon.$ This gives (\ref{b}).
\end{Remark}
%Another generalization of the distribution of squares associated to the Piatetski-Shapiro sequences may be  the distribution
%of Piatetski-Shapiro integers in arithmetic progressions. For example, in \cite{Le}, Leitmann and  Wolke showed that for $\gamma=1/c$ and $c\in(1,12/11),$ one has
%\begin{align*}
%\sum_{\substack{1\leq n\leq N^{\gamma},\\ [n^{c}]\in\mathcal{P}\\ [n^{c}]\equiv a(\textup{mod}\ q)}}1=\frac{N^{\gamma}}{\varphi(q)\log q}\left(1+o(1)\right),
%\end{align*}
%where $a$ and $q$ be coprime integers, $q\geq1,$ $\mathcal{P}$ is the set of primes. Analogously, we focus on the distribution
%of Piatetski-Shapiro square free numbers over arithmetic progressions in the following.
%\begin{Theorem}\label{zw1}
%Let $a$ and $q$ be coprime integers, $q\geq1.$ For fixed $c\in(1,3/2),$ $\gamma=1/c,$ we have
%\begin{align*}
%\sum_{\substack{1\leq n\leq N^{\gamma},\\ [n^{c}]\in\mathcal{Q}_{2}\\ [n^{c}]\equiv a(\textup{mod}\ q)}}1=\gamma N^{\gamma-1}\sum_{\substack{1\leq n\leq N,\\ n\in\mathcal{Q}_{2}\\ n\equiv a(\textup{mod} \ q)}}1+O(N^{1-\varepsilon}).
%\end{align*}
%\end{Theorem}

{\bf Notation:}
The implied constants through out this paper may depend on $\varepsilon$ and the parameter $c$ (and
obviously, some times depend on some auxiliary parameters). However, these implied constants are always uniform with respect to the main parameters $N$, $s$ and $S$.
$\mathcal{Q}_{2}$ is
the set of positive square-free integers.
%The letter $p$ denotes prime number. And $\mathscr{P}$ denotes the set of prime numbers.
We write $u\sim U$ to denote $U<u\leq 2U.$
We use $\square$ to denote a non-specified integer square. This means that $n=\square$ is equivalent to the statement that $n$ is a perfect square and thus we can write
 \[Q_{c}(s,N)=\sum_{n\leq N\atop [n^{c}]=s\square}1.
 \]

\bigskip
\section{Proof for Theorem \ref{LSZ0}}

Before the  proof, we need introduce some lemmas as preliminaries.
The following lemma can be seen as a weaker version of Lemma 4.3 in \cite{GK} (see also Corollary 6.7 in \cite{Bor}).

\begin{Lemma}\label{I1}
Let $f\in C^{\infty}([N,2N])$ such that there exists $T>0$ such that, for all $x\in[N,2N]$ and all $j\in\mathbb{Z}_{\geq0},$ we have $|f^{(j)}(x)|\asymp T N^{-j}.$ Let $(\kappa,\lambda)$ be an exponent pair. Then
\[
\sum_{N<n\leq 2N}\psi(f(n))\ll \left(T^{\kappa}N^{\lambda}\right)^{1/(\kappa+1)}
+NT^{-1}.
\]
\end{Lemma}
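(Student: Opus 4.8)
The plan is to bound the exponential sum $\sum_{N<n\leq 2N}\psi(f(n))$ by combining the standard Fourier expansion of the sawtooth function $\psi$ with the exponent-pair estimate for the resulting exponential sums. The function $\psi(t)=t-[t]-1/2$ has a well-known truncated Fourier series approximation: for any $H\geq 1$, one writes
\begin{align*}
\psi(t)=-\sum_{1\leq |h|\leq H}\frac{1}{2\pi \mi h}\me(ht)+O\!\left(\min\!\left(1,\frac{1}{H\|t\|}\right)\right),
\end{align*}
where $\me(x)=\me^{2\pi\mi x}$ and $\|t\|$ denotes the distance to the nearest integer. First I would substitute this into the sum over $n$, separating the main exponential-sum contribution from the error coming from the minimum term.

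For the error term, the plan is to use the standard device bounding $\sum_{N<n\leq 2N}\min(1,1/(H\|f(n)\|))$. Since $f$ has derivative of size $f'(x)\asymp T N^{-1}$ (the $j=1$ case of the hypothesis) and $f'$ is monotone (from $f''$ having constant sign, via $j=2$), the values $f(n)$ are suitably spaced, and the classical estimate yields a contribution of order $N/H + T/H$ or similar; choosing $H\asymp N T^{-1}\cdot(\text{something})$ so that this error is absorbed into the $NT^{-1}$ term in the lemma. The cleaner route is to take $H$ to be a free parameter and optimize at the end, so that the sawtooth error contributes $\ll N/H$ plus the genuine exponential-sum pieces.

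The heart of the argument is estimating, for each $h$ with $1\leq h\leq H$, the exponential sum $\sum_{N<n\leq 2N}\me(hf(n))$. Here I would apply the definition of an exponent pair $(\kappa,\lambda)$ to the phase $hf(n)$: its derivative has size $hf'(x)\asymp hTN^{-1}$, so writing this as the "$T$-parameter'' of the exponent-pair framework equal to $hTN^{-1}$ over an interval of length $N$, the exponent-pair bound gives $\sum_{N<n\leq 2N}\me(hf(n))\ll (hTN^{-1})^{\kappa}N^{\lambda}=h^{\kappa}T^{\kappa}N^{\lambda-\kappa}$. Summing the coefficient $1/h$ against this over $1\leq h\leq H$ produces $\sum_{h\leq H}h^{\kappa-1}T^{\kappa}N^{\lambda-\kappa}\ll H^{\kappa}T^{\kappa}N^{\lambda-\kappa}$. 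Combining with the $N/H$ sawtooth error gives a total of order $H^{\kappa}T^{\kappa}N^{\lambda-\kappa}+N/H$, and optimizing over $H$ (balancing the two terms) yields the claimed bound $(T^{\kappa}N^{\lambda})^{1/(\kappa+1)}+NT^{-1}$, the last term arising when the optimal $H$ would exceed the natural truncation forced by $f'$.

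The main obstacle, I expect, is the careful bookkeeping to justify that the hypothesis $|f^{(j)}(x)|\asymp TN^{-j}$ for all $j$ legitimately places $hf$ into the exponent-pair regime uniformly in $h$, and that the derivative conditions (not merely a second-derivative test) are genuinely what the exponent-pair machinery requires; the smoothness across all orders is exactly what guarantees this, so invoking it cleanly is the delicate point rather than a computational difficulty. The optimization producing the precise form $(T^\kappa N^\lambda)^{1/(\kappa+1)}$ together with the secondary term $NT^{-1}$ is then routine once the two competing quantities are in hand.
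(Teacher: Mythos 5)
The paper does not actually prove this lemma: it is quoted verbatim as a known result, ``a weaker version of Lemma 4.3 in \cite{GK} (see also Corollary 6.7 in \cite{Bor}),'' so there is no internal proof to compare against. Your sketch is, in substance, the standard proof of that cited result: truncated Fourier expansion of $\psi$, exponent-pair bound $\sum_{N<n\leq 2N}e(hf(n))\ll (hTN^{-1})^{\kappa}N^{\lambda}$ applied to each harmonic, summation of $h^{-1}\cdot h^{\kappa}T^{\kappa}N^{\lambda-\kappa}$ to get $H^{\kappa}T^{\kappa}N^{\lambda-\kappa}$, and balancing against the $N/H$ truncation error to produce $(T^{\kappa}N^{\lambda})^{1/(\kappa+1)}$. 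That outline is correct and is exactly how Graham--Kolesnik prove Lemma 4.3.

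Two soft spots worth tightening. First, your account of where $NT^{-1}$ comes from (``when the optimal $H$ would exceed the natural truncation forced by $f'$'') is not quite the right mechanism: that term arises from the harmonics $h$ with $h|f'|\asymp hTN^{-1}<1$, where one applies the Kusmin--Landau first-derivative estimate $\sum_n e(hf(n))\ll N/(hT)$ instead of the exponent-pair bound, and $\sum_{h\leq N/T}h^{-1}\cdot N/(hT)\ll NT^{-1}$; when the optimal $H$ drops below $1$ the main term already exceeds $N$ and the bound is trivial, so that case contributes nothing. Second, the step $\sum_{h\leq H}h^{\kappa-1}\ll H^{\kappa}$ needs $\kappa>0$ (for $\kappa=0$ it costs a $\log H$, harmless here since $(0,1)$ gives a trivial bound anyway), and the uniform applicability of the exponent-pair estimate to the phases $hf$ is precisely what the hypothesis $|f^{(j)}(x)|\asymp TN^{-j}$ for all $j$ is designed to guarantee, as you correctly note at the end.
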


By Lemma \ref{I1}, we can immediately obtain the follows.
\begin{Corollary}\label{cz}
Let $N\in\mathbb{Z}_{\geq1}$ be sufficiently large. Let $d\in\{1,2\},$ $1/2<\gamma<1,$ $|\sigma|\leq1,$  and $y\neq0$ fixed. If $(\kappa,\lambda)$ is an exponent pair, then one has
\[
\sum_{N<n\leq 2N}\psi(y(n^{d}+\sigma)^{\gamma})\ll |y|^{\frac{\kappa}{1+\kappa}}
N^{\frac{\lambda+d\kappa\gamma}
{1+\kappa}}+|y|^{-1}N^{1-d\gamma}.
\]
\end{Corollary}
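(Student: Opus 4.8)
The plan is to reduce Corollary \ref{cz} to a direct application of Lemma \ref{I1} with the explicit choice $f(x)=y(x^{d}+\sigma)^{\gamma}$ on the interval $[N,2N]$. First I would verify the hypothesis of Lemma \ref{I1}, namely that there is a single size parameter $T>0$ with $|f^{(j)}(x)|\asymp T N^{-j}$ for all $j\geq 0$ and all $x\in[N,2N]$. Writing $g(x)=(x^{d}+\sigma)^{\gamma}$, repeated differentiation produces, by the chain rule and the generalized power rule, a sum of terms of the shape $(x^{d}+\sigma)^{\gamma-k}$ times a polynomial in $x$ of degree $dk-j$ coming from the derivatives of the inner function $x^{d}+\sigma$. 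Since $d\in\{1,2\}$ is bounded, $|\sigma|\leq 1$ is harmless compared with $x^{d}\asymp N^{d}$ on $[N,2N]$ (here one uses $N$ sufficiently large so that $x^{d}+\sigma\asymp N^{d}$), each such term is of order $N^{d\gamma-j}$. Thus $|g^{(j)}(x)|\asymp N^{d\gamma-j}$, and multiplying by the constant $y$ gives $|f^{(j)}(x)|\asymp |y|N^{d\gamma}\cdot N^{-j}$. This identifies the correct parameter as $T=|y|N^{d\gamma}$.

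Next I would substitute $T=|y|N^{d\gamma}$ into the conclusion of Lemma \ref{I1}. The main term $(T^{\kappa}N^{\lambda})^{1/(\kappa+1)}$ becomes
\[
\left(|y|^{\kappa}N^{d\kappa\gamma}N^{\lambda}\right)^{1/(1+\kappa)}
=|y|^{\frac{\kappa}{1+\kappa}}N^{\frac{\lambda+d\kappa\gamma}{1+\kappa}},
\]
which is exactly the first term in the stated bound. The secondary term $NT^{-1}$ becomes $N\cdot|y|^{-1}N^{-d\gamma}=|y|^{-1}N^{1-d\gamma}$, matching the second term. Assembling the two recovers the displayed inequality verbatim.

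The only genuine content is the verification of the derivative asymptotics, so I expect the main (though still routine) obstacle to be confirming the two-sided bound $|f^{(j)}(x)|\asymp TN^{-j}$ uniformly in $j$. The upper bound is immediate from the triangle inequality applied to the Leibniz/Faà di Bruno expansion. The lower bound requires a little care: one must check that the dominant term in the derivative expansion, namely the one where all $j$ derivatives fall on the outer power (producing the coefficient $\gamma(\gamma-1)\cdots(\gamma-j+1)\,d^{j}\,x^{(d-1)j}(x^{d}+\sigma)^{\gamma-j}$ when $d$ is small), does not cancel against the remaining terms. Since $\gamma\in(1/2,1)$ is fixed and not an integer, the falling-factorial coefficients never vanish, and for $N$ large the spurious lower-order contributions from $\sigma$ and from the polynomial factors are strictly smaller, so no cancellation occurs at the leading order. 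This secures the two-sided estimate and completes the deduction. Note that the hypothesis $|\sigma|\leq 1$ together with $N$ large is precisely what makes the perturbation $\sigma$ negligible, and the restriction $d\in\{1,2\}$ keeps all implied constants uniform.
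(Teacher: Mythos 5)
Your proposal is correct and is exactly the argument the paper intends: the paper derives Corollary \ref{cz} by stating only that it follows immediately from Lemma \ref{I1}, and your choice $f(x)=y(x^{d}+\sigma)^{\gamma}$ with $T=|y|N^{d\gamma}$, together with the verification that $x^{d}+\sigma\asymp N^{d}$ makes the derivative bounds two-sided (noting $d\gamma\notin\mathbb{Z}$ since $\gamma\in(1/2,1)$ and $d\in\{1,2\}$), is the routine computation being left implicit. Substituting $T$ into the two terms of Lemma \ref{I1} reproduces the stated bound verbatim, as you show.
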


Next lemma can be seen in Theorem A.6 in \cite{GK} or Theorem 18 in \cite{Va}.
\begin{Lemma}\label{z3}
For $0<|t|<1,$ let
$$W(t) = \pi t(1-|t|)\cot\pi t + |t|.$$
  For $x\in\mathbb{R},$ $H\geq1,$ we define
$$\psi^{*}(x)=\sum_{1\leq |h|\leq H}(2\pi ih)^{-1}W\left(\frac{h}{H+1}\right)e(hx)$$
and
\[
\delta(x)=\frac{1}{2H+2}\sum_{|h|\leq H}\left(1-\frac{|h|}{H+1}\right)e(hx).
\]
Then $\delta(x)$ is non-negative, and we have
$$|\psi^{*}(x)-\psi(x)|\leq
\delta(x).$$
\end{Lemma}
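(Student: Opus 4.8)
The plan is to follow the Beurling--Selberg--Vaaler construction of extremal trigonometric polynomials, reducing the two-sided estimate to a pair of one-sided approximations. The non-negativity of $\delta$ is immediate: the defining sum is a scalar multiple of the Fej\'er kernel, so that
$$\delta(x)=\frac{1}{2(H+1)^{2}}\left(\frac{\sin\pi(H+1)x}{\sin\pi x}\right)^{2}\geq0,$$
with the usual interpretation at $x\in\mathbb{Z}$. For the substantive bound $|\psi^{*}(x)-\psi(x)|\leq\delta(x)$ I would produce two trigonometric polynomials $\psi^{\pm}$ of degree at most $H$ with $\psi^{-}(x)\leq\psi(x)\leq\psi^{+}(x)$ for every $x$ and $\psi^{+}-\psi^{-}=2\delta$; then $\psi^{*}:=\tfrac12(\psi^{+}+\psi^{-})$ automatically satisfies $|\psi^{*}-\psi|\leq\tfrac12(\psi^{+}-\psi^{-})=\delta$, and it remains only to check that its Fourier coefficients coincide with the stated ones.

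To construct $\psi^{\pm}$ I would start from Beurling's extremal function $B(z)$, the entire function of exponential type $2\pi$ that majorizes $\operatorname{sgn}(x)$, has minimal $L^{1}$ deviation, agrees with $\operatorname{sgn}$ to second order at the nonzero integers, and whose Fourier transform is supported in $[-1,1]$. Scaling the argument by $H+1$ and periodizing $B$ together with its reflection $-B(-x)$ by Poisson summation yields one-sided approximants to the $1$-periodic sawtooth; because $\widehat{B}$ is supported in $[-1,1]$, the scaling collapses each periodization to a trigonometric polynomial of degree at most $H$, which is precisely the origin of the cut-off $1\leq|h|\leq H$. The same mechanism forces the difference $\psi^{+}-\psi^{-}$ to be the Fej\'er-kernel expression $2\delta$, since the extremal deviation $B-\operatorname{sgn}$ has total mass $1$.

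Finally I would identify the coefficients. The $h$-th Fourier coefficient of $\psi^{*}$ emerges as $(2\pi ih)^{-1}$ times the value at $h/(H+1)$ of the transform attached to the periodized building block; evaluating this with the partial-fraction expansion $\pi\cot\pi t=\tfrac1t+\sum_{n\geq1}\tfrac{2t}{t^{2}-n^{2}}$ produces exactly $W(t)=\pi t(1-|t|)\cot\pi t+|t|$, with the normalization $W(t)\to1$ as $t\to0$ reproducing the leading behaviour of the sawtooth's coefficients. I expect the crux to be the rigorous verification of the pointwise sandwich $\psi^{-}\leq\psi\leq\psi^{+}$: this is where the full extremal and interpolation property of $B$ is needed, and it demands care at the integers, where $\psi$ jumps and one must confirm that the majorant and minorant straddle the midpoint value correctly. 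Once the sandwich is in hand, matching the explicit constant $W$ and the Fej\'er-kernel form of $\delta$ is routine though lengthy. Since the statement is classical, in practice I would simply invoke \cite{GK, Va}; the outline above records the structure of that argument.
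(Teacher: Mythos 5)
The paper offers no proof of this lemma at all; it is quoted verbatim from Theorem A.6 of \cite{GK} and Theorem 18 of \cite{Va}, which is also where your outline ultimately points. Your sketch --- the Fej\'er-kernel identity for $\delta$, the majorant/minorant pair $\psi^{\pm}=\psi^{*}\pm\delta$ built from Beurling's extremal function, and the identification of $W$ via the cotangent expansion --- is a faithful summary of the argument given in those references, so it is consistent with (indeed, more informative than) what the paper provides.
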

%\begin{Lemma}\label{FI}
%Let $\alpha,\alpha_{1},\alpha_{2}$ be real constants such that $\alpha\neq1$ and $\alpha\alpha_{1}\alpha_{2}\neq0.$ Let $M,M_{1},M_{2},x\geq1$ and let $\Phi=(\phi_{m})_{m\sim M}$ and $\Psi=(\psi_{m_{1},m_{2}})_{m_{1}\sim M_{1}, m_{2}\sim M_{2}}$ be two sequences of complex numbers supported on $m\sim M,$ $m_{1}\sim M_{1}$ and $m_{2}\sim M_{2}$ with $|\phi_{m}|\leq 1$ and $|\psi_{m_{1},m_{2}}\leq1|.$ Then for the sum
%\[
%S_{\Phi,\Psi}(x,M,M_{1},M_{2})=\sum_{m\sim M}
%\sum_{m_{1}\sim M_{1}}\sum_{m_{2}\sim M_{2}}
%\phi_{m}\psi_{m_{1},m_{2}}
%e\left(x\frac{m^{\alpha}m_{1}^{\alpha_{1}}
%m_{2}^{\alpha_{2}}}{M^{\alpha}M_{1}^{\alpha_{1}}
%M_{2}^{\alpha_{2}}}\right),
%\]
%we have
%\begin{align*}
%S_{\Phi,\Psi}(x,M,M_{1},M_{2})&\ll
%\left(x^{1/4}M^{1/2}(M_{1}M_{2})^{3/4}
%+M^{7/10}M_{1}M_{2}+M(M_{1}M_{2})^{3/4}\right.\\
%&\left.+
%x^{-1/4}M^{11/10}M_{1}M_{2}
%\right)\log^{2} (2MM_{1}M_{2}).
%\end{align*}
%\end{Lemma}

We follow  the arguments of    Theorem 2.3 in \cite{lsz}.
Let $c>1$, $c\notin \mathbb{N}$.
Then the condition $\left[n^c\right]=sm^r$ is equivalent to
$$
-(sm^r+1)^{\gamma}< -n\le -(sm^r)^{\gamma}.
$$
Recall that $\gamma=1/c$.
Let $M=s^{-1/r}N^{s/r}$. So with
$$
\mathfrak{Q}_{c}(S,N)=\sum_{\substack{s\leq S\\ s\in\mathcal{Q}_{2}}} Q_{c}(s,N)
,$$
 we have
\begin{align}\label{SZ1}
\mathfrak{Q}_{c}(S,N)=
S_{0}-E_{1}+E_{2}+O(1),
\end{align}
where
\begin{align*}
S_{0}&=\sum_{\substack{ sm^r\leq N^{c}\\s\leq S\\
s\in\mathcal{Q}_{r}} }
\left((sm^r+1)^{\gamma}
-(sm^r)^{\gamma}\right)
\end{align*}
and
\[
E_1=\sum_{\substack{sm^r\leq N^{c}\\s\leq S\\
s\in\mathcal{Q}_{r}}}
\psi\left(-(sm^r)^{\gamma}\right),\quad
E_2=\sum_{{\substack{sm^r\leq N^{c}\\s\leq S\\
s\in\mathcal{Q}_{r}}}}
\psi\left(-(sm^r+1)^{\gamma}\right).
\]
By following the arguments in \cite{lsz}, we have
\begin{align*}
S_{0}&=\sum_{\substack{sm^{2}\leq N^{c}, s\leq S\\ s\in\mathcal{Q}_{2}}}\left((sm^{2}+1)^{\gamma}
-s^{\gamma}m^{2\gamma}\right)\\
&=\frac{12\gamma}{\pi^{2}(2\gamma-1)}
S^{\frac12}N^{1-c/2}
+O(N^{1-c/2}\log N)
+O(SN^{1-c}).
\end{align*}
Then we focus on the estimates of
\[
E_{1}=\sum_{\substack{sm^{2}\leq N^{c}, s\leq S\\ s\in\mathcal{Q}_{2}}}\psi(-s^{\gamma}
m^{2\gamma})
\]
and
\[
E_{2}=\sum_{\substack{sm^{2}\leq N^{c}, s\leq S\\ s\in\mathcal{Q}_{2}}}
\psi\left(-(sm^{2}+1)^{\gamma}\right).
\]

Then we can complete the proof for the following three lemmas.
For the proof, we will deal with the exponential sum by three different ideas. Firstly, we   deal with $E_{1}$ and $E_{2}$ by using Corollary \ref{cz}. In fact, with this idea, we can obtain the following lemma.
\begin{Lemma}\label{001}
For any fixed $c\in(1,2),$ $\gamma=1/c,$   any exponent pair $(\kappa,\lambda)$ with $(1-\kappa)/2\leq\lambda-\kappa$   and  $N\rightarrow\infty$,  we have
\begin{align*}
\begin{split}
\mathfrak{Q}_{c}(S,N)-
\frac{12\gamma}{\pi^{2}(2\gamma-1)}
S^{1/2}N^{1-c/2}
 \ll
N^{\frac{\kappa}{1+\kappa}+\frac c2+\varepsilon}S^{\frac{\lambda}{1+\kappa}-\frac12}
 +N^{\varepsilon}S^{1-\gamma}+SN^{1-c+\varepsilon}.
\end{split}
\end{align*}
\end{Lemma}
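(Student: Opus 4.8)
The plan is to reduce everything to the two exponential sums $E_1$ and $E_2$ by invoking the given decomposition $\mathfrak{Q}_{c}(S,N)=S_{0}-E_{1}+E_{2}+O(1)$ together with the stated evaluation of $S_0$. Since $S_0$ already supplies the main term plus the errors $O(N^{1-c/2}\log N)$ and $O(SN^{1-c})$, and since $c\in(1,2)$ forces $\frac{\kappa}{1+\kappa}+\frac c2 > 1-\frac c2$, the term $N^{1-c/2}\log N$ is dominated by the first claimed error term (whose $S$-exponent $\frac{\lambda}{1+\kappa}-\frac12\ge0$ makes it at least $N^{\kappa/(1+\kappa)+c/2}$), while $O(SN^{1-c})$ is exactly the last term. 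Thus only $E_1$ and $E_2$ require genuine estimation.

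For $E_1=\sum_{s\le S,\,s\in\mathcal{Q}_{2}}\sum_{m^2\le N^c/s}\psi(-m^{2\gamma}s^{\gamma})$ I would fix $m$ and apply the exponent pair to the sum over $s$. To remove the square-free condition I write $\mathbf{1}_{s\ \mathrm{sqfree}}=\sum_{k^{2}\mid s}\mu(k)$ and substitute $s=k^{2}t$, turning the inner sum into $\sum_{k}\mu(k)\sum_{t}\psi(-(mk)^{2\gamma}t^{\gamma})$. On a dyadic block $s\sim S_0$ the variable $t$ runs over $t\sim S_0/k^2$ with $k\le\sqrt{2S_0}$, and Corollary \ref{cz} (with $d=1$, $y=-(mk)^{2\gamma}$, $\sigma=0$, noting $\gamma=1/c\in(1/2,1)$) gives
\[
\sum_{t\sim S_0/k^2}\psi(-(mk)^{2\gamma}t^{\gamma})\ll (mk)^{\frac{2\gamma\kappa}{1+\kappa}}\Big(\tfrac{S_0}{k^2}\Big)^{\frac{\lambda+\kappa\gamma}{1+\kappa}}+(mk)^{-2\gamma}\Big(\tfrac{S_0}{k^2}\Big)^{1-\gamma}.
\]
Summing over $k$, the first term carries $k^{-2\lambda/(1+\kappa)}$ and the second $k^{-2}$, so both $k$-sums converge (the first by the hypothesis $\lambda/(1+\kappa)\ge1/2$, with a harmless $\log$ at equality); hence the square-free condition costs only $N^{\varepsilon}$.

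Next I would sum over $m\le M:=N^{c/2}S_0^{-1/2}$. The first term leaves $\sum_{m\le M}m^{2\gamma\kappa/(1+\kappa)}\ll M^{1+2\gamma\kappa/(1+\kappa)}$; inserting $M$ and using $c\gamma=1$ collapses the $N$-exponent to $\frac{\kappa}{1+\kappa}+\frac c2$ and the $S_0$-exponent to $\frac{\lambda}{1+\kappa}-\frac12$, which is $\ge0$ under the hypothesis, so the dyadic sum over $S_0\le S$ peaks at $S_0=S$ and yields $N^{\kappa/(1+\kappa)+c/2+\varepsilon}S^{\lambda/(1+\kappa)-1/2}$. The second term leaves $\sum_{m\le M}m^{-2\gamma}\ll1$ (as $2\gamma>1$) together with $S_0^{1-\gamma}$, whose dyadic sum again peaks at $S_0=S$, giving $N^{\varepsilon}S^{1-\gamma}$. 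The sum $E_2$ is handled identically: after the same substitution one writes $-(sm^{2}+1)^{\gamma}=-(mk)^{2\gamma}\big(t+(mk)^{-2}\big)^{\gamma}$, which fits Corollary \ref{cz} with $\sigma=(mk)^{-2}$; this is admissible since $|\sigma|\le1$, so $E_2$ obeys the same bound. Collecting $E_1$, $E_2$ and the errors from $S_0$ gives the lemma.

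The main obstacle is the bookkeeping forced by the square-free condition: one must verify that the Möbius-and-substitution step really leaves only a convergent $k$-sum, and this convergence is precisely the hypothesis $(1-\kappa)/2\le\lambda-\kappa$, i.e.\ $\lambda/(1+\kappa)\ge1/2$ — the very inequality that simultaneously forces the dyadic $S_0$-sum to be dominated by $S_0=S$. Pinning down these endpoint choices (and checking that the trivial $m^{-2\gamma}$ and $k^{-2}$ sums converge because $\gamma>1/2$) is the delicate part; the exponent-pair input itself enters purely as the black box Corollary \ref{cz}.
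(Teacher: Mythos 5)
Your proposal is correct and follows essentially the same route as the paper: the same decomposition $\mathfrak{Q}_{c}(S,N)=S_{0}-E_{1}+E_{2}+O(1)$, the same M\"obius substitution $s=k^{2}t$ (the paper's $s=rd^{2}$), and the same application of Corollary \ref{cz} to the innermost sum over the square-free kernel with coefficient $(mk)^{2\gamma}$. Your write-up is in fact somewhat more careful than the paper's about why the $k$- and $m$-sums converge and why the dyadic blocks peak at $S_0=S$, with the hypothesis $(1-\kappa)/2\le\lambda-\kappa$ entering exactly where the paper uses it.
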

\begin{proof}
For $E_{1}$, we have
\begin{align*}
E_{1}&=\sum_{\substack{sm^{2}\leq N^{c}, s\leq S\\s\in\mathcal{Q}_{2}}}\psi(-s^{\gamma}
m^{2\gamma})\\
&=\sum_{rd^{2}m^{2}\leq N^{c}\atop rd^{2}\leq S}\mu(d)\psi(-r^{\gamma}
d^{2\gamma}m^{2\gamma})\\
&=\sum_{m\leq N^{c/2}}\sum_{d\leq N^{c/2}/m\atop d\leq S^{1/2}}\mu(d)
\sum_{r\leq N^{c}/m^{2}d^{2}\atop r\leq S/d^{2}}
\psi(-r^{\gamma}
d^{2\gamma}m^{2\gamma})
.\end{align*}
By Corollary \ref{cz}, this gives that
\begin{align*}
E_{1}&\ll (\log N)^{3}\left(\sum_{m\sim M}\sum_{d\sim D}\left|\sum_{r\sim R}
\psi(-r^{\gamma}
d^{2\gamma}m^{2\gamma})\right|\right)\\
& \ll (\log N)^{3}\left(\sum_{m\sim M}\sum_{d\sim D}\left(\left(d^{2\gamma}
m^{2\gamma}\right)^{\frac{\kappa}{1+\kappa}}
R^{\frac{\lambda+\gamma\kappa}{1+\kappa}}+
\left(d^{2\gamma}
m^{2\gamma}\right)^{-1}R^{1-\gamma}
\right)\right),
\end{align*}
where
\[
RD^{2}M^{2}\ll N^{c} \ \textup{and} \ \ RD^{2}\ll S.
\]
Hence for $(1-\kappa)/2\leq\lambda-\kappa,$ we have
\[
E_{1}\ll N^{\frac{\kappa}{1+\kappa}+\frac c2+
\varepsilon}S^{\frac{\lambda}{1+\kappa}-\frac12}
+S^{1-\gamma}.
\]
Similar argument gives that for $(1-\kappa)/2\leq\lambda-\kappa,$
\[
E_{2}\ll N^{\frac{\kappa}{1+\kappa}+\frac c2+
\varepsilon}S^{\frac{\lambda}{1+\kappa}-\frac12}
+S^{1-\gamma}.
\]
Recall that
\begin{align*}
S_{0}&=\sum_{\substack{sm^{2}\leq N^{c}, s\leq S\\ s\in\mathcal{Q}_{2}}}\left((sm^{2}+1)^{\gamma}
-s^{\gamma}m^{2\gamma}\right)\\
&=\frac{12\gamma}{\pi^{2}(2\gamma-1)}
S^{\frac12}N^{1-c/2}
+O(N^{1-c/2}\log N)
+O(SN^{1-c}).
\end{align*}
Then for $(1-\kappa)/2\leq\lambda-\kappa,$ we can obtain that
\begin{align*}
\mathfrak{Q}_{c}(S,N)-
\frac{12\gamma}{\pi^{2}(2\gamma-1)}
&S^{1/2}N^{1-c/2}
\ll N^{\frac{\kappa}{1+\kappa}+\frac c2+\varepsilon}S^{\frac{\lambda}{1+\kappa}-\frac12}
+N^{\varepsilon}S^{1-\gamma}
\\&
+SN^{1-c}+N^{1-\frac c2+\varepsilon}.\end{align*}
One may note that $\frac{\lambda}{1+\kappa}-\frac12>0$ and $1-\frac{c}{2}<\frac{\kappa}{1+\kappa}+\frac{c}{2}$ for $c\in(1,2).$
Hence, for $c\in(1,2),$ we have
\begin{align*}
\mathfrak{Q}_{c}(S,N)-
\frac{12\gamma}{\pi^{2}(2\gamma-1)}
S^{1/2}N^{1-c/2}
\ll N^{\frac{\kappa}{1+\kappa}+\frac c2+\varepsilon}S^{\frac{\lambda}{1+\kappa}-\frac12}
+SN^{1-c}+N^{\varepsilon}S^{1-\gamma}.
\end{align*}
\end{proof}
The second idea is that we can change quadruple sum into  triple sum by interesting a divisor function. This is possible. Because  the indexes of two variables are the same. Hence, we can save a sum.
\begin{Lemma}\label{002}
For any fixed $c\in(1,2),$ $\gamma=1/c,$     and  $N\rightarrow\infty$,  we have
\begin{align*}
\begin{split}
\mathfrak{Q}_{c}(S,N)-
\frac{12\gamma}{\pi^{2}(2\gamma-1)}
S^{1/2}N^{1-c/2}
 \ll
S^{1/8}N^{(2+3c)/8+\varepsilon}
 +SN^{1-c+\varepsilon}.
\end{split}
\end{align*}
\end{Lemma}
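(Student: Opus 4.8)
The plan is to bound the two oscillatory sums $E_1$ and $E_2$ in the decomposition $\mathfrak{Q}_c(S,N)=S_0-E_1+E_2+O(1)$, since the estimate for $S_0$ is already recorded above and contributes the main term together with the admissible errors $N^{1-c/2}\log N$ and $SN^{1-c}$ (both dominated by the terms claimed in Lemma \ref{002} for $c\in(1,2)$). I would treat $E_1$ in detail; $E_2$ is handled identically, because its phase $(sm^2+1)^\gamma=(r\ell^2+1)^\gamma$ differs from a monomial only by a bounded perturbation of the argument and hence satisfies the same derivative bounds $|f^{(j)}|\asymp TN^{-j}$ underlying Lemma \ref{I1} and Corollary \ref{cz}. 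The first move is to introduce the divisor function exactly as advertised in the Remark: writing $\mathbf 1_{s\in\mathcal{Q}_2}=\sum_{d^2\mid s}\mu(d)$ and $s=d^2r$ (so that $r$ runs over all integers with coefficient $1$ while $d$ carries $\mu(d)$) gives
\[
E_1=\sum_{d}\mu(d)\sum_{r,m}\psi\left(-r^\gamma d^{2\gamma}m^{2\gamma}\right).
\]
Since $d$ and $m$ enter with the same exponent $2\gamma$, we have $d^{2\gamma}m^{2\gamma}=(dm)^{2\gamma}$, and setting $\ell=dm$ collapses the triple sum to the double sum
\[
E_1=\sum_{r}\sum_{\ell}b(r,\ell)\,\psi\left(-r^\gamma\ell^{2\gamma}\right),\qquad b(r,\ell)=\sum_{\substack{d\mid\ell\\ d^2r\le S}}\mu(d),
\]
with $|b(r,\ell)|\le\tau(\ell)$ and ranges $r\ell^2\ll N^c$, $r\ll S$. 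This is precisely the saving of one summation.

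Next I would localize dyadically, $r\sim R$ and $\ell\sim L$ with $RL^2\ll N^c$ and $R\ll S$, and replace $\psi$ by the truncated sum $\psi^{*}$ of Lemma \ref{z3} at height $H$, controlling the deviation by the nonnegative kernel $\delta$. This reduces matters to the weighted two-dimensional exponential sum
\[
\Sigma(h)=\sum_{r\sim R}\sum_{\ell\sim L}b(r,\ell)\,e\left(-h\,r^\gamma\ell^{2\gamma}\right),\qquad 1\le h\le H,
\]
with an additional completion loss of size $\ll RL/H$ arising from $\delta$ together with $\sum_{\ell\le L}\tau(\ell)\ll LN^{\varepsilon}$.

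The crux is then to extract cancellation in both variables simultaneously; bounding the $r$-sum trivially would only reproduce the $(\kappa,\lambda)$-term of Lemma \ref{001}. Because $r$ carries coefficient $1$, I would process the $r$-sum: first absorb the weight by Cauchy–Schwarz in $\ell$ (paying only $\sum_{\ell\sim L}\tau(\ell)^2\ll LN^{\varepsilon}$), and then estimate the resulting unweighted monomial sum $\sum_{r}\sum_{\ell}e(-h\,r^\gamma\ell^{2\gamma})$ by a two-dimensional second–derivative argument — concretely a $B$-process in $\ell$ followed by an exponent pair in $r$, or a direct appeal to the Robert–Sargos bound for two-dimensional sums. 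The relevant Hessian of $r^\gamma\ell^{2\gamma}$ equals $2\gamma^2(1-3\gamma)\,r^{\gamma-2}\ell^{2\gamma-2}$, which is nonzero for $\gamma=1/c\in(1/2,1)$, so the stationary–phase analysis is nondegenerate. Inserting the bound for $\Sigma(h)$, summing the weight $h^{-1}$ over $1\le h\le H$, balancing against the completion loss $RL/H$ by an optimal choice of $H$, and finally summing over the dyadic boxes subject to $RL^2\ll N^c$ and $R\ll S$, the extremal box $R\sim S$, $L\sim(N^c/S)^{1/2}$ is expected to furnish $S^{1/8}N^{(2+3c)/8+\varepsilon}$; adding the $S_0$-errors yields the Lemma.

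The main obstacle I anticipate lies in this last step: retaining joint cancellation in $r$ and $\ell$ while the reduced $\ell$-sum still carries the arithmetic weight $b(r,\ell)$. A plain exponent pair does not apply to a weighted sum, so the weight must be stripped (by Cauchy–Schwarz, or by fixing $d\sim D$ and tracking the coprimality constraint $d^2r\le S$) before the two-dimensional estimate is invoked, and one must then check that the completion height $H$ and the dyadic summation balance exactly to the exponents $1/8$ and $(2+3c)/8$ rather than to a nearby weaker pair.
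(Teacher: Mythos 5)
Your setup — the decomposition $\mathfrak{Q}_{c}(S,N)=S_{0}-E_{1}+E_{2}+O(1)$, the M\"obius expansion of the square-free condition, the collapse $d^{2\gamma}m^{2\gamma}=(dm)^{2\gamma}$ with $t=dm$ carrying a divisor-bounded weight, and the Vaaler truncation of $\psi$ — coincides with the paper's. But the crucial step is different, and your version has a genuine gap: after the collapse you treat $h$ as a mere parameter, estimating for each fixed $h$ a \emph{two}-dimensional sum $\sum_{r}\sum_{\ell}e(-hr^{\gamma}\ell^{2\gamma})$ by a second-derivative/$B$-process argument and then summing $h^{-1}$ over $h\leq H$. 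The whole point of the paper's Lemma \ref{002} is that the phase $hr^{\gamma}t^{2\gamma}$ is a monomial in all \emph{three} variables $h,t,r$, so the three-dimensional Robert--Sargos bound (Lemma \ref{rs}) applies to the full triple sum $\sum_{h\sim H}\sum_{t\sim T}\bigl|\sum_{r\sim R}e(-hr^{\gamma}t^{2\gamma})\bigr|$ with $r$ as the unweighted inner variable and $h$ occupying one of the two outer slots. There the factor $H^{3/4}$ coming from $(XM_{1}^{2}M_{2}^{3}M_{3}^{3})^{1/4}$ together with $X^{1/4}\asymp (HR^{\gamma}T^{2\gamma})^{1/4}$ is exactly cancelled by the weight $h^{-1}\asymp H^{-1}$, leaving $(R^{2+\gamma}T^{3+2\gamma})^{1/4}=R^{1/8}N^{1/4+3c/8}\leq S^{1/8}N^{(2+3c)/8}$ with no dependence on $H$ at all. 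Collapsing $d,m$ into $t$ is precisely what frees the third Robert--Sargos slot for $h$; your plan uses the collapse but not the freed slot.

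Quantitatively, your route does not reach the claimed exponents. A two-dimensional second-derivative bound for fixed $h$ gives roughly $X^{1/2}(RL)^{1/2}\asymp h^{1/2}N^{1/2}R^{1/4}N^{c/4}$; after summing $h^{-1}$ over $h\leq H$ and balancing against the completion loss $H^{-1}S^{1/2}N^{c/2}$ one lands at about $S^{1/3}N^{(1+c)/3}$, and $8(1+c)-3(2+3c)=2-c>0$ shows this exceeds $S^{1/8}N^{(2+3c)/8}$ for every $c\in(1,2)$ (and $S^{1/3}$ is worse than $S^{1/8}$ as well). You flagged yourself that the exponents might come out ``to a nearby weaker pair'' — they do, and that is the gap. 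Two smaller points: the Cauchy--Schwarz step to strip the weight is unnecessary once you keep the absolute value on the inner $r$-sum (outer weights bounded by $\tau(t)\ll N^{\varepsilon}$ are harmless in the Robert--Sargos formulation), and your Hessian determinant of $r^{\gamma}\ell^{2\gamma}$ should read $2\gamma^{2}(1-3\gamma)r^{2\gamma-2}\ell^{4\gamma-2}$, not $2\gamma^{2}(1-3\gamma)r^{\gamma-2}\ell^{2\gamma-2}$, though the nonvanishing conclusion for $\gamma\in(1/2,1)$ stands.
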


\begin{proof}
Firstly, we introduce a well-known result in \cite{RS}.
\begin{Lemma}\label{rs}
For real numbers $\alpha_{1}, \alpha_{2}, \alpha_{3}$ such that $\alpha_{1}\alpha_{2}\alpha_{3}
(\alpha_{1}-1)(\alpha_{2}-2)\neq0$.
For $X>0,$ $M_{1}\geq1,$ $M_{2}\geq 1,$ and $M_{3}\geq 1,$ let
\[
S(M_{1},M_{2},M_{3}):=
\sum_{m_{2}\sim M_{2}}\sum_{m_{3}\sim M_{3}}
\left|\sum_{m_{1}\sim M_{1}}e\left(X\frac{m_{1}^{\alpha_{1}} m_{2}^{\alpha_{2}}
 m_{3}^{\alpha_{3}}}
{M_{1}^{\alpha_{1}}M_{2}^{\alpha_{2}}
M_{3}^{\alpha_{3}}}
\right)\right|,
\]
where $e(t)=e^{2\pi i t}.$   For any $\varepsilon>0,$ we have
\begin{align*}
S(M_{1},M_{2},M_{3})(XM_{1}M_{2}M_{3})^{-\varepsilon}&\ll
 \left(X
M_{1}^{2}M_{2}^{3}
M_{3}^{3}\right)^{1/4} +M_{1}^{1/2} M_{2}M_{3}
 +X^{-1}M_{1}M_{2}M_{3},
\end{align*}
 where
the implied constant may depend on $\alpha_{1},\alpha_{2},\alpha_{3},$ and $\varepsilon.$
\end{Lemma}
For $E_{1}$, we have
\begin{align*}
E_{1}&=\sum_{\substack{sm^{2}\leq N^{c}, s\leq S\\s\in\mathcal{Q}_{2}}}\psi(-s^{\gamma}
m^{2\gamma})\\
&=\sum_{rd^{2}m^{2}\leq N^{c}\atop rd^{2}\leq S}\mu(d)\psi(-r^{\gamma}
d^{2\gamma}m^{2\gamma})\\
&=\sum_{m\leq N^{c/2}}\sum_{d\leq N^{c/2}/m\atop d\leq S^{1/2}}\mu(d)
\sum_{r\leq N^{c}/m^{2}d^{2}\atop r\leq S/d^{2}}
\psi(-r^{\gamma}
d^{2\gamma}m^{2\gamma})
.\end{align*}
By Lemma \ref{z3}, this gives that
\begin{align*}
E_{1}&\ll N^{\varepsilon}\left(\sum_{m\sim M}\sum_{d\sim D}\left|\sum_{r\sim R}
\psi(-r^{\gamma}
d^{2\gamma}m^{2\gamma})\right|\right)\\
& \ll
 N^{\varepsilon}\left(\sum_{h\sim H}h^{-1}\sum_{t\sim T}\left|\sum_{r\sim R}
e\left(-hr^{\gamma}
t^{2\gamma}\right)\right|\right)+H^{-1}S^{1/2}N^{c/2},
\end{align*}
where
\[
T\asymp DM,\ RD^{2}M^{2}\ll N^{c} \ \textup{and} \ \ RD^{2}\ll S.
\]
Then by Lemma \ref{rs}, we have
\begin{align*}
E_{1}&\ll N^{\varepsilon}\left(\sum_{m\sim M}\sum_{d\sim D}\left|\sum_{r\sim R}
\psi(-r^{\gamma}
d^{2\gamma}m^{2\gamma})\right|\right)\\
& \ll
S^{1/8}N^{(2+3c)/8+\varepsilon} +SN^{1-c+\varepsilon}.
\end{align*}
Similar argument gives that
\[
E_{2}\ll S^{1/8}N^{(2+3c)/8+\varepsilon} +SN^{1-c+\varepsilon}.
\]
Recall that
\begin{align*}
S_{0}&=\sum_{\substack{sm^{2}\leq N^{c}, s\leq S\\ s\in\mathcal{Q}_{2}}}\left((sm^{2}+1)^{\gamma}
-s^{\gamma}m^{2\gamma}\right)\\
&=\frac{12\gamma}{\pi^{2}(2\gamma-1)}
S^{\frac12}N^{1-c/2}
+O(N^{1-c/2}\log N)
+O(SN^{1-c}).
\end{align*}
Note that for $c\in(1,2),$ one has $(1-c/2)<(2+3c)/8.$
Then   we can obtain that
\begin{align*}
\mathfrak{Q}_{c}(S,N)-
\frac{12\gamma}{\pi^{2}(2\gamma-1)}
S^{1/2}N^{1-c/2}
\ll S^{1/8}N^{(2+3c)/8+\varepsilon} +SN^{1-c+\varepsilon}
.\end{align*}

\end{proof}

The following lemma is  using only one of these ideas. Then by following the arguments in \cite{lsz}, introducing Lemma \ref{rs} and inserting some ideas of \cite{Bor,GK,zw}, we can obtain the following lemma.
\begin{Lemma}\label{003}
For any fixed $c\in(1,2),$ $\gamma=1/c,$     and  $N\rightarrow\infty$,  we have
\begin{align*}
\begin{split}
\mathfrak{Q}_{c}(S,N)-
\frac{12\gamma}{\pi^{2}(2\gamma-1)}
S^{1/2}N^{1-c/2}
 \ll
S^{1/5}N^{(1+2c)/5+\varepsilon}
 +SN^{1-c+\varepsilon}.
\end{split}
\end{align*}
\end{Lemma}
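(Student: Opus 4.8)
The plan is to reuse the global architecture of the two preceding lemmas and change only the exponential-sum input. Starting from the splitting (\ref{SZ1}), namely $\mathfrak{Q}_{c}(S,N)=S_{0}-E_{1}+E_{2}+O(1)$, I would quote verbatim the evaluation of the main term $S_{0}=\frac{12\gamma}{\pi^{2}(2\gamma-1)}S^{1/2}N^{1-c/2}+O(N^{1-c/2}\log N)+O(SN^{1-c})$. Since $(1+2c)/5-(1-c/2)=(9c-8)/10>0$ for every $c\in(1,2)$, the secondary term $N^{1-c/2}\log N$ is already $\ll N^{(1+2c)/5+\varepsilon}\le S^{1/5}N^{(1+2c)/5+\varepsilon}$, so the whole statement reduces to proving $E_{1},E_{2}\ll S^{1/5}N^{(1+2c)/5+\varepsilon}+SN^{1-c+\varepsilon}$. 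By the symmetry between the phases $\psi(-(sm^{2})^{\gamma})$ and $\psi(-(sm^{2}+1)^{\gamma})$ it suffices to treat $E_{1}$.

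For $E_{1}$ I would, exactly as in Lemma \ref{001} and Lemma \ref{002}, detect the square-free condition by $\mu$, writing $E_{1}=\sum_{d}\mu(d)\sum_{m}\sum_{r}\psi(-r^{\gamma}(dm)^{2\gamma})$ subject to $rd^{2}m^{2}\le N^{c}$ and $rd^{2}\le S$, and then localise the three variables dyadically to ranges $r\sim R$, $d\sim D$, $m\sim M$ with $RD^{2}M^{2}\ll N^{c}$ and $RD^{2}\ll S$. Next I would linearise the sawtooth by the Vaaler approximation of Lemma \ref{z3}: replacing $\psi$ by $\psi^{*}$ produces a main exponential sum $\sum_{1\le|h|\le H}h^{-1}W(\cdots)\sum\sum e(-hr^{\gamma}(\cdots)^{2\gamma})$ together with a diagonal contribution which, as in the proof of Lemma \ref{002}, is $\ll H^{-1}S^{1/2}N^{c/2}$ and is negligible once $H$ is taken large. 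The single idea kept here is that the frequency $h$ occurs linearly in the phase; this is exactly what permits $h$ to occupy one of the slots of the Robert--Sargos estimate (Lemma \ref{rs}), whose hypothesis on that slot only asks for a non-zero exponent.

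The heart of the matter is then how one feeds the resulting multiple exponential sum into the machinery so as to produce the exponent $1/5$ rather than the $1/8$ of Lemma \ref{002}. Here I would follow the original treatment of Liu--Shparlinski--Zhang \cite{lsz}: rather than compressing $d$ and $m$ into one variable (the divisor-function device that powers Lemma \ref{002}), I would retain the genuine two-dimensional oscillation in the $m$- and $s$-aspects and estimate the double sum in those variables, invoking Lemma \ref{rs} with $h$ in the linear slot. Optimising the resulting bound over the dyadic parameters $R,D,M$ under $RD^{2}M^{2}\ll N^{c}$, $RD^{2}\ll S$ and over the length $H$ of the Vaaler expansion should then return the term $S^{1/5}N^{(1+2c)/5+\varepsilon}$, the remaining Robert--Sargos contributions and the Vaaler diagonal collapsing into $SN^{1-c+\varepsilon}$ (and into the already-absorbed $N^{(1+2c)/5+\varepsilon}$).

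I expect the main obstacle to be precisely this multi-parameter optimisation: one must check that, across the whole admissible region of $(R,D,M,H)$, the extremal configuration is the one yielding $S^{1/5}N^{(1+2c)/5}$, and that every other contribution (the secondary Robert--Sargos terms, the $X^{-1}$-type term, the diagonal $H^{-1}S^{1/2}N^{c/2}$, and the $S_{0}$-errors) is dominated by $S^{1/5}N^{(1+2c)/5+\varepsilon}+SN^{1-c+\varepsilon}$. A subsidiary technical point is to keep the square-free weight under control while preserving the two-dimensional cancellation, since removing it through $\mu$ reintroduces the variable $d$; balancing this against the two size constraints is what pins down the final exponents. Once $E_{1}$ is bounded, the identical treatment of $E_{2}$ and insertion into (\ref{SZ1}) completes the proof.
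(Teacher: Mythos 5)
Your overall architecture (the splitting (\ref{SZ1}), M\"obius detection of the square-free condition, dyadic localisation, Vaaler's approximation, then Robert--Sargos) matches the paper, but the one step that actually produces the exponent $1/5$ is set up incorrectly. You propose to place the Vaaler frequency $h$ into one of the three slots of Lemma \ref{rs}, to keep only two ``spatial'' variables, and you assert that the diagonal $H^{-1}S^{1/2}N^{c/2}$ is ``negligible once $H$ is taken large.'' In that configuration the phase parameter is $X\asymp hY$ with $Y=(RD^{2}M^{2})^{\gamma}\ll N$, so after multiplying by the Vaaler weight $h^{-1}\asymp H^{-1}$ the leading Robert--Sargos term $(XM_{1}^{2}M_{2}^{3}M_{3}^{3})^{1/4}$ becomes independent of $H$; with two spatial variables it evaluates to $S^{1/8}N^{(2+3c)/8}$ at best (or $S^{1/2}N^{(1+c)/4}$ for the other choice of inner variable). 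That is precisely the bound of Lemma \ref{002}, not the claimed $S^{1/5}N^{(1+2c)/5}$, and no optimisation over $R,D,M,H$ can recover the $1/5$ exponent from that starting point. You have in effect described the mechanism of Lemma \ref{002} while rejecting its divisor-function compression, which only makes the count of variables ($h,r,d,m$ versus three slots) harder to reconcile.

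The paper does the opposite: $h$ is kept \emph{outside} the Robert--Sargos slots, absorbed as a fixed parameter into $X=hR^{\gamma}D^{2\gamma}M^{2\gamma}$, and the three slots are filled by the three spatial variables $r$, $d$, $m$ (so indeed no compression of $d$ and $m$, but also no slot for $h$). The leading term is then $\ll h^{1/4}S^{1/8}N^{1/4+3c/8}$, which after summation against the weight $h^{-1}$ over $h\sim H$ gives $H^{1/4}S^{1/8}N^{(2+3c)/8}$ --- a quantity that \emph{grows} with $H$ and must be balanced against the Vaaler diagonal $H^{-1}S^{1/2}N^{c/2}$. The choice $H=[S^{3/10}N^{c/10-1/5}]$ equalises the two contributions and yields $S^{1/5}N^{(1+2c)/5+\varepsilon}$. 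This balancing in $H$ is the missing idea in your sketch: with $h$ occupying a slot there is nothing to balance, and the lemma as stated cannot be reached.
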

\begin{proof}
For $E_{1}$, we have
\begin{align*}
E_{1}&=\sum_{\substack{sm^{2}\leq N^{c}, s\leq S\\s\in\mathcal{Q}_{2}}}\psi(-s^{\gamma}
m^{2\gamma})\\
&=\sum_{rd^{2}m^{2}\leq N^{c}\atop rd^{2}\leq S}\mu(d)\psi(-r^{\gamma}
d^{2\gamma}m^{2\gamma})\\
&=\sum_{m\leq N^{c/2}}\sum_{d\leq N^{c/2}/m\atop d\leq S^{1/2}}\mu(d)
\sum_{r\leq N^{c}/m^{2}d^{2}\atop r\leq S/d^{2}}
\psi(-r^{\gamma}
d^{2\gamma}m^{2\gamma})
.\end{align*}
By Lemma \ref{z3}, we can obtian that
\begin{align*}
E_{1}&\ll (\log N)^{3}\left(\sum_{m\sim M}\sum_{d\sim D}\left|\sum_{r\sim R}
\psi(-r^{\gamma}
d^{2\gamma}m^{2\gamma})\right|\right)\\
& \ll
 N^{\varepsilon}\sum_{h\sim H}h^{-1}\left(\sum_{m\sim M}\sum_{d\sim D}\left|\sum_{r\sim R}
e(-hr^{\gamma}
d^{2\gamma}m^{2\gamma})\right|\right)+H^{-1}S^{1/2}N^{c/2},
\end{align*}
where
\[
RD^{2}M^{2}\ll N^{c} \ \textup{and} \ \ RD^{2}\ll S.
\]
By Lemma \ref{rs}, we have
\begin{align*}
 \sum_{m\sim M}\sum_{d\sim D}\left|\sum_{r\sim R}
e(-hr^{\gamma}
d^{2\gamma}m^{2\gamma})\right|
&\ll \left(\left(hR^{\gamma}D^{2\gamma}M^{2\gamma}\right)^{1/4}
R^{1/2}(DM)^{3/4}\right.
\\&\left.+R^{1/2}DM
+RDM/hR^{\gamma}D^{2\gamma}M^{2\gamma}\right)
(HDM)^{\varepsilon}\\
&\ll
\left(h^{1/4}S^{1/8}N^{1/4+3c/8}
+N^{c/2}+S^{1/2}N^{c/2-1}h^{-1}\right)N^{\varepsilon}.
\end{align*}
Summing over $h,$ we can obtain that
\begin{align*}
E_{1} &\ll\sum_{h\sim H}h^{-1}\left(h^{1/4}S^{1/8}N^{1/4+3c/8}
+N^{c/2}+S^{1/2}N^{c/2-1}h^{-1}\right)N^{\varepsilon}
\\&+H^{-1}S^{1/2}N^{c/2}.
\end{align*}
Choosing $H=[S^{3/10}N^{c/10-1/5}],$ then we have
\[
E_{1} \ll S^{1/5}N^{(1+2c)/5+\varepsilon}+SN^{1-c+\varepsilon}
\]
by assuming such that $[S^{3/10}N^{c/10-1/5}]\geq1.$
For $[S^{3/10}N^{c/10-1/5}]<1,$ we may observe that the estimate of $S^{1/5}N^{(1+2c)/5+\varepsilon}$ is trivial.
Similar argument gives that
\[
E_{2}\ll S^{1/8}N^{(2+3c)/8+\varepsilon} +SN^{1-c+\varepsilon}.
\]
Recall that
\begin{align*}
S_{0}&=\sum_{\substack{sm^{2}\leq N^{c}, s\leq S\\ s\in\mathcal{Q}_{2}}}\left((sm^{2}+1)^{\gamma}
-s^{\gamma}m^{2\gamma}\right)\\
&=\frac{12\gamma}{\pi^{2}(2\gamma-1)}
S^{\frac12}N^{1-c/2}
+O(N^{1-c/2}\log N)
+O(SN^{1-c}).
\end{align*}
Note that for $c\in(1,2),$ one has $(1-c/2)<(1+2c)/5.$
Then   we can obtain that
\begin{align*}
\mathfrak{Q}_{c}(S,N)-
\frac{12\gamma}{\pi^{2}(2\gamma-1)}
S^{1/2}N^{1-c/2}
\ll S^{1/5}N^{(1+2c)/5+\varepsilon} +SN^{1-c+\varepsilon}
.\end{align*}
\end{proof}

\bigskip
{\bf Acknowledgements} The authors would like to thank the referee who gives some  detailed corrections and suggestions.

%\address{Igor E. Shparlinski\\
%Department of Computing\\  Macquarie University\\
%Sydney, NSW 2109 \\ Australia}
%\email{igor.shparlinski@mq.edu.au}

\address{Wei Zhang\\ School of Mathematics and Statistics\\
               Henan University\\
               Kaifeng  475004, Henan\\
               China}
\email{zhangweimath@126.com}
\end{document}